\newcommand{\tsk}[1]{\textcolor{YellowOrange}}
\def\@endtheorem{\endtrivlist}
\newtheorem{teo}{Theorem}[section]
\newtheorem{defin}[teo]{Definition}
\newtheorem{cor}[teo]{Corollary}
\newtheorem{conj}[teo]{Conjecture}
\newtheorem{lemma}[teo]{Lemma}
\theoremstyle{definition}
\newtheoremstyle{dico}
 {\baselineskip}   
  {\topsep}   
  {}  
  {0pt}       
  {} 
  {.}         
  {5pt plus 1pt minus 1pt} 
  {}          
\theoremstyle{dico}
\newtheorem{say}[teo]{}
\numberwithin{equation}{section}
\newcommand{\ra}{\rightarrow}
\newcommand{\C}{\mathbb{C}}
\newcommand{\R}{\mathbb{R}}
\newcommand{\Zeta}{{\mathbb{Z}}}
\newcommand{\QQ}{{\mathbb{Q}}}
\newcommand{\meno}{^{-1}}
\newcommand{\alfa}{\alpha}
\newcommand{\vacuo}{\emptyset}
\newcommand{\La}{\Lambda}
\newcommand{\restr}[1]          {\vert_{#1}}
\newcommand{\End}{\operatorname{End}}
\newcommand{\Ann}{\operatorname{Ann}}
\newcommand{\Lie}{\operatorname{Lie}}
\newcommand{\om}{\omega}
\renewcommand{\phi}{\varphi}
\newcommand{\lds}{\ldots}
\newcommand{\cds}{\cdots}
\newcommand{\cd}{\cdot}
\newcommand{\im}{\operatorname{im}}
\newcommand{\lra}{\longrightarrow}
\newcommand{\Ga}{\Gamma}
\newcommand{\lieg}{\mathfrak{g}}
\newcommand{\liem}{\mathfrak{m}}
\newcommand{\liep}{\mathfrak{p}}
\newcommand{\id}{\operatorname{id}}
\newcommand{\est} {\Lambda}
\newcommand{\Gl}{\operatorname{GL}}
\newcommand{\Ad}{\operatorname{Ad}}
\newcommand{\PP}{\mathbb{P}}
\renewcommand{\phi}             {\varphi}
\newcommand{\sieg}{\mathfrak{S}}
\newcommand{\Hg}{\operatorname{Hg}}
\newcommand{\siegv}{\sieg(V,\om)}
\newcommand{\siegl}{\sieg(V,\om)}
\newcommand{\Spv}{\operatorname{Sp}(V,\om)}
\newcommand{\spv}{\operatorname{\mathfrak{sp}}(V,\om)}
\newcommand{\mt}{\operatorname{MT}}
\newcommand{\smt}{\operatorname{Hg}}
\newcommand{\Sp}                {\operatorname {Sp}}
\newcommand{\liek}{\mathfrak{k}}
 \renewcommand{\Im}              {\operatorname{Im}}
\newcommand{\seconda}{\operatorname{II}}
\newcommand{\Mg}{\mathsf{M}_g}
\newcommand{\Mgs}{\mathsf{M}^*_g}
\newcommand{\A}{\mathsf{A}}
\newcommand{\Ag}{\mathsf{A}_g}
\newcommand{\M}{\mathsf{M}}
\newcommand{\ad}{{\operatorname{ad}}}
\newcommand{\molt}{\mathsf{m}}
\newcommand{\cc}{S}
\newcommand{\us}{\backslash}
\begin{document}

\author{Alessandro Ghigi}

\title{On some differential-geometric aspects of the Torelli map}

\begin{abstract}
  In this note we survey recent results on the extrinsic geometry of
  the Jacobian locus inside $\A_g$. We describe the second fundamental
  form of the Torelli map as a multiplication map, recall the relation
  between totally geodesic subvarieties and Hodge loci and survey
  various results related to totally geodesic subvarieties and the
  Jacobian locus.
\end{abstract}

\address{Universit\`{a} di Pavia} 
\email{alessandro.ghigi@unipv.it}

\thanks{The author was partially supported by MIUR PRIN 2015 ``Moduli
  spaces and Lie theory'' and by GNSAGA of INdAM.  }
\subjclass[2000]{ 14C30;
  14D07;
  14H10;14H15;
  14H40;
  32G20
}

\maketitle

\tableofcontents{}

\section{Introduction}

\begin{say}
  Let $\Mg$ denote the moduli space of smooth projective curves of
  genus $g$ and let $\Ag$ denote the moduli space of principally
  polarized abelian varieties of dimension $g$.  The \emph{Torelli
    map} $j : \Mg \ra \Ag$ associates to the point $[C] \in \Mg$ the
  moduli point of the jacobian of $C$ with the polarization induced
  from the cup product.  Both $\Mg$ and $\Ag$ have natural structures
  of quasi-projective varieties and $j$ is a regular map. By Torelli
  theorem it is injective.

  If one works over the complex numbers (as we do systematically),
  both $\Mg$ and $\Ag$ can be provided with the structure of complex
  analytic orbifold. (See \cite[XII,
  4]{arbarello-cornalba-griffiths-2} for the main definitions.)  This
  allows to work as if $\Mg$ and $\Ag$ were smooth. (Another
  possibility, that for our purposes is equivalent, is to fix level
  structures.)  In the following we will sometimes simplify the
  terminology by omitting the word "orbifold".

  The map $j$ is an orbifold map, i.e. it lifts to a holomorphic map
  of the uniformizers.  Oort and Steenbrink \cite {oort-steenbrink}
  proved that the restriction of $j$ to the set of non-hyperelliptic
  curves is an orbifold immersion.

  Next we recall that $\Ag$ has a natural metric. Indeed it is the
  quotient of the Siegel space $\sieg_g$, which is an irreducible
  Hermitian symmetric space of the non-compact type, by a properly
  discontinuous group of isometries. We call the induced metric on
  $\Ag$ the \emph{Siegel metric}.

  Summing up, if $\Mgs \subset \Mg$ denotes the complement of the
  hyperelliptic locus, then $j(\Mgs)$ is a complex analytic
  suborbifold of the Riemannian orbifold $\Ag$. It is natural to study
  the extrinsic geometry of $j(\Mgs)$ inside $\Ag$.  This study is
  still largely open and the goal of this note is to discuss some of
  the results obtained so far.  The rough idea behind these results is
  that $j(\Mgs)$ should be ``very curved'' inside $\Ag$. In other
  words the way in which $\Mg$ sits inside $\Ag$ should be
  ``complicated''.  This statement is extremely vague, but there are
  at least three ways to make it precise.
\end{say}

\begin{say}

  On the one hand the second fundamental form of the embedding
  $j: \Mgs \hookrightarrow \Ag$ should be highly nondegenerate,
  i.e. it should most of the time be non-zero.  This is far from
  understood. But there are some results on the second fundamental
  form.

  In \S \ref{seconda} we explain in some detail how the second
  fundamental form can be interpreted as a multiplication map. This is
  based on the fundamental work of Colombo, Pirola and Tortora
  \cite{cpt}.

\end{say}

\begin{say}

  On the other hand one might look at totally geodesic subvarieties of
  $\Ag$ and ask whether $j(\Mgs)$ contains some of the them. Here the
  expectation is that $j(\Mgs)$ should contain very few totally
  geodesic subvarieties.  The analogous statement for a surface in
  3-space is that the surface contains no line.  In the case of the
  Jacobian locus this expectation agrees with a rather famous
  conjecture, the Coleman-Oort conjecture, saying the $j(\Mg)$ should
  contain no Hodge locus of $\Ag$.

  In \S \ref{sec:siegel} we discuss these kind of problems.  First of
  all we prove that Hodge loci of $\Ag$ are totally geodesic
  subvarieties.  This is well-known, but it is hard to find an
  elementary exposition.

  Next we recall some non-existence results for totally geodesic
  subvarieties in $j(\Mg)$ based on the second fundamental form.  On
  the other hand we explain that in low genus there are some
  interesting examples of totally geodesic subvarieties generically
  contained in $\Ag$.
\end{say}

\begin{say}
  \label{terzo-rif}
  Using totally geodesic subvarieties in a different way we get to the
  third way of making precise the fact that $j(\Mg)$ is very curved
  inside $ \Ag$.  Consider a submanifold $M$ of a Riemannian manifold
  $A$.  One can look at the intersection of $M$ with totally geodesic
  submanifolds $Z \subset A$.  The fact that $M\cap Z$ has high
  codimension in $M$ for any $Z$ is our third way to express the
  complexity of the embedding $ M \hookrightarrow A$.  We explain this
  at the end of \S \ref{sec:siegel} and we describe a recent result
  saying that in the case of the embedding $j:\Mgs \subset \Ag$ the
  intersection of $j(\Mgs)$ with any totally geodesic subvariety has
  codimension at least 2.
\end{say}

\begin{say}
  This note is dedicated to the memory of my deeply esteemed teacher
  and friend Paolo de Bartolomeis.  While writing it I thought of
  Paolo so many times!  I was led to recall the glorious times when I
  was a student and I listened to Paolo's beautiful lectures.  I
  learned from him so many basic concepts!  Lie groups, Lie algebras,
  symmetric spaces, complex structures, symplectic forms, totally
  geodesic submanifolds and so on, just to mention the ones that are
  used continuously in this note.

  Paolo was really a friend. He had a wonderful sense of humour and I
  liked that a lot.  I was always happy when I was going to meet him
  at conferences, since talking with him was always very interesting
  and extremely pleasant.  Our last contact was by email. I had just
  watched for the first time a movie that Paolo liked a lot. I wrote
  him to tell that I also liked it a lot. His reply was great! Paolo
  was such a nice guy! I miss him a lot.

\end{say}

\medskip

{\bfseries \noindent{Acknowledgements}}.  The author wishes to thank
Professors L. Biliotti and G. P. Pirola for very interesting
discussions and Professor J. S. Milne for very interesting emails.

\section{The second fundamental form}
\label{seconda}

\begin{say}
  If $C$ is a non-hyperelliptic curve and $x=[C] \in \Mg$, then
  $dj_x : T_x\Mg \ra T_{j(x)}\Ag$ is injective and we have an exact
  sequence
  \begin{gather*}
    0 \lra T_x\Mg \stackrel{dj_x}{\lra} T_{j(x)} \Ag \stackrel{\pi}{
      \lra } N_x \lra 0,
  \end{gather*}
  where $N$ denotes the normal bundle to $j(\Mgs) \subset \Ag$.  The
  normal bundle in the Riemannian sense, that is as orthogonal
  complement, can be identified with the quotient bundle which is
  holomorphic.  Recall that $T_x \Mg \cong H^1(C,T_C)$ and
  $T^*_x \Mg \cong H^0(C,2K_C)$. If $A$ is a principally polarized
  abelian variety and $y=[A]\in \Ag$, then
  $T_{y} \Ag \cong S^2 H^0(A, T_A)$. If $y=j(x) $, i.e.
  $A= H^0(C,K_C) ^* / H_1(C,\Zeta)$, then $H^0(A, T_A) = H^0(C, K_C)$
  and $T_{j(x)} \Ag \cong S^2 H^0(C, K_C)^*$.  The transpose of the
  map $dj_x$ is the multiplication map
  \begin{gather*}
    \molt: T_{j(x)}^* \Ag = S^2H^0(C,K_C) \lra H^0(C,2K_C) \cong T_x^*
    \Mg.
  \end{gather*}
  Therefore $N_x^* = \Ann (\im j_x) $ is identified with $\ker \molt $
  and the dual of the above sequence is the following one:
  \begin{gather*}
    0 \lra I_2 (K_C) : =\ker \molt \lra S^2H^0(C,K_C)
    \stackrel{\molt}{\lra} H^0(C,2K_C) \lra 0.
  \end{gather*}
  (See \cite{cf1} for more details.).  Denote by
  \begin{gather*}
    \seconda_x : S^2 T_{x}\M_g=S^2H^1(C,T_C) \ra {N}_{x}
  \end{gather*}
  the second fundamental form of the Torelli embedding with respect to
  the Siegel metric on $\A_g$. We can identify $\seconda_x$ with a map
  \begin{gather}
    \label{ssd}
    \rho_{x} : N_x^*= I_2(K_C)\ra S^2 H^0(C, 2K_C).
  \end{gather}
  We will use the two symbols to distinguish the different
  interpretations, but they are the same object.
\end{say}

Our goal in this section is to interpret the map $\rho$ in \eqref{ssd}
as a multiplication map between spaces of sections on
$\cc:=C\times C$.

We start by explaining in some detail how to reinterpret domain and
target of $\rho$ as spaces of sections of appropriate bundles on
$\cc$.

Call $p$ and $q$ the two projections:
\begin{equation*}
  \begin{tikzcd}
    S \arrow{d}{p} \arrow{r}{q}  & C  \\
    C &
  \end{tikzcd}
  \qquad 
  \begin{aligned}
    p(x,y)=x,
    \\
    q(x,y)=y.
  \end{aligned}
\end{equation*}
Given line bundles $L\ra C$ and $M\ra C$, set
\begin{gather*}
  L\boxtimes M : = p^*L \otimes q^* M \lra S.
\end{gather*}
The map
\begin{gather*}
  H^0(C,L) \otimes H^0(C,M) \lra H^0(S, L \boxtimes M), \quad s\otimes
  t \mapsto p^*s \otimes q^* t,
\end{gather*}
is an isomorphism.  On $S$ we have the automorphism
\begin{gather*}
  \sigma : S \ra S , \quad \sigma (x,y) = (y,x).
\end{gather*}
For any line bundle $L\ra C$, $\sigma$ lifts to $L\boxtimes L$ as
follows:
\begin{gather*}
  \tilde{\sigma} : (L\boxtimes L)_{(x,y)} = L_x \otimes L_y \lra
  (L\boxtimes L)_{\sigma(x,y)} = L_y \otimes L_x
\end{gather*}
is simply the map
\begin{gather}
  \label{defsigmatilde}
  \tilde{\sigma}(u\otimes v) = v \otimes u, \quad u\in L_x , v\in L_y.
\end{gather}
Consider the special case $L=K_C$.  We have a canonical isomorphism
$K_S \cong K_C \boxtimes K_C$ given by the map
\begin{gather*}
  f: K_C\boxtimes K_C \lra K_S, \quad f(\alfa\otimes \beta ) =
  p^*\alfa \wedge q^* \beta , \quad \alfa \in T_x^*C, \quad \beta \in
  T_y^*C.
\end{gather*}
On $K_S$ we have two involutions lifting $\sigma$. One is simply the
pull-back: $ \sigma^* : K_S \lra K_S$. Since $p\sigma = q$, we have
\begin{gather*}
  \sigma^* ( p^*\alfa \wedge q^* \beta) = - p^*\beta \wedge q^*\alfa.
\end{gather*}
The other lift is $f \tilde{\sigma}f\meno : K_S \ra K_S$ and satisfies
\begin{gather*}
  f\tilde{\sigma}f\meno ( p^*\alfa \wedge q^* \beta) = f\tilde{\sigma}
  ( \alfa \otimes \beta) = f ( \beta \otimes \alfa) = p^*\beta \wedge
  q^*\alfa.
\end{gather*}
Hence
\begin{gather}
  \sigma^* = - f \tilde{\sigma}f\meno .\label{dueinvo}
\end{gather}
Consider now the isomorphism
\begin{gather*}
  f: H^0(C,K_C) \otimes H^0(C,K_C) \stackrel{\cong}{\lra} H^0(S,K_S),
\end{gather*}
induced by $f$ (and that we still denote by $f$).  It follows from
\eqref{dueinvo} that
\begin{gather*}
  H^0(S,K_S)^+: = \{\alfa \in H^0(S,K_S): \sigma^*\alfa = \alfa\} =   f (\est^2 H^0(C,K_C)) ,\\
  H^0(S,K_S)^-: = \{\alfa \in H^0(S,K_S): \sigma^*\alfa = -\alfa\} = f
  (S^2 H^0(C,K_C)) .
\end{gather*}
Let $\Delta \subset S$ denote the diagonal. It is a reduced divisor in
$S$.  We claim that
\begin{gather*}
  f(I_2(K_C) ) = \{\alfa \in H^0(S,K_S)^-: \alfa \restr{\Delta}=0\}.
\end{gather*}
To see this fix a coordinate system $z: U \subset C \ra \C$. From this
we get a chart $(z_1, z_2): U':=U\times U \subset S \ra \C^2$ by
setting $z_1 = z\circ p$ and $z_2 = z\circ q$. Further set
$x:= (z_1+ z_2 ) /2 $ and $y:= (z_1 - z_2)/2$. Then $(x,y)$ is another
coordinate system on $U'$, $\sigma (x,y) = (x, -y)$ and
$\Delta\cap U' = \{y=0\}$. Any $\alfa \in H^0(S,K_S)$ has a local
expression $\alfa = \phi(x,y)dx\wedge dy$ on $U'$ and
\begin{gather*}
  \sigma^*\alfa = - \phi(x,-y)dx \wedge dy.
\end{gather*}
Thus $\alfa \in H^0(S,K_S)^-$ iff $\phi(x,-y)=\phi(x,y)$ i.e. $\phi$
is an even function of $y$. In this case for any odd $m$ we have
\begin{gather*}
  \frac{\partial^m \phi }{\partial y^m} (x,0) \equiv 0.
\end{gather*}
It follows that if $\alfa\in H^0(S, K_S)^-$ vanishes along $\Delta$ it
vanishes there to second order. Hence
\begin{gather}
  \label{Iduecomesideve}
  I_2(K_C) = H^0(S,K_S(-2\Delta))^-.
\end{gather}

We can apply the same analysis to tensor powers of $K_C$.  Using the
same notation as above, we see that for any $n$ there is an
isomorphism
\begin{gather*}
  f_n: K^{n}_C\boxtimes K^n_C\stackrel{\cong}{ \lra} K^n_S, \\
  f_n(\alfa^n\otimes \beta^n ) = (p^*\alfa \wedge q^* \beta)^n , \quad
  \alfa \in T_x^*C, \beta \in T_y^*C.
\end{gather*}
Moreover for any $n$ there is a lifting $\sigma^*$ of $\sigma$ to
$K_S^n$.  By the same computation as above we get
\begin{gather*}
  \sigma^* = (-1)^n f_n \tilde{\sigma} f_n\meno.
\end{gather*}
Indeed for $\alfa\in T_xC^*$, $\beta \in T_yC^*$, we have
\begin{gather*}
  \tilde{\sigma} ( \alfa^n \otimes \beta^n) = \beta ^n \otimes
  \alfa^n,
\end{gather*}
as in \eqref{defsigmatilde}. Thus
\begin{gather*}
  f_n \tilde{\sigma} f_n\meno ((p^*\alfa \wedge q^*\beta )^n ) = f_n
  \tilde{\sigma} (\alfa^n \otimes\beta ^n ) =
  f_n   (\beta^n \otimes \alfa^n ) = \\
  =(p^*\beta \wedge q^* \alfa )^n = \sigma ^* ( (q^*\beta \wedge p^*
  \alfa )^n )= (-1)^n \sigma ^* ( ( p^* \alfa \wedge q^*\beta )^n ).
\end{gather*}
Thus
\begin{gather}
  \label{identificami}
  H^0(S,K^2_S)^+: = \{\alfa \in H^0(S,K_S): \sigma^*\alfa = \alfa\} =
  f_2 (S^2 H^0(C,K_C)) .
\end{gather}
Finally consider the line bundle $K_S(2\Delta) \lra S$. Since $\Delta$
is $\sigma$-invariant, there is a lift of $\sigma$ to $K_S(2\Delta) $,
that we still denote by $\sigma^*$.  We set
\begin{gather*}
  H^0(S, K_S(2\Delta))^\pm = \{ \alfa \in H^0(S, K_S(2\Delta)): \sigma
  ^* \alfa = \pm \alfa\}.
\end{gather*}

We are finally in the position to state the main theorem about the
second fundamental form.

\begin{teo}
  If $C$ is not hyperelliptic of genus at least 4, then there exists a
  section $\eta \in H^0(S, K_S(2\Delta) )^-$ such that, using
  \eqref{Iduecomesideve} and \eqref{identificami}, the second
  fundamental form \eqref{ssd} for $x=[C]$ gets identified with the
  multiplication map by $\eta$:
  \begin{gather*}
    \rho_x: H^0(S, K_S(-2\Delta))^- \lra H^0(S,2K_S)^+, \quad \rho_x(
    \alfa)= \eta \cd \alfa.
  \end{gather*}
\end{teo}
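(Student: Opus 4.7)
The plan is to construct a section $\eta\in H^0(S,K_S(2\Delta))^-$ canonically attached to $C$ --- geometrically a $\sigma$-antiinvariant meromorphic $2$-form on $S$ with a pole of order exactly $2$ along the diagonal --- and then to verify by a Dolbeault/cup-product computation that multiplication by $\eta$ realizes the second fundamental form. The argument follows the infinitesimal variation of Hodge structure approach of Colombo--Pirola--Tortora \cite{cpt}. An adjunction calculation along $\Delta$ gives $N_{\Delta/S}\cong T_C$, hence $K_S\vert_\Delta\cong K_C^{\otimes 2}$ and $K_S(2\Delta)\vert_\Delta\cong\OO_C$. From the $\sigma$-equivariant short exact sequence
\begin{gather*}
  0\lra K_S\lra K_S(2\Delta)\lra K_S(2\Delta)\vert_{2\Delta}\lra 0
\end{gather*}
I would produce $\eta$ with principal part equal to the canonical generator $1\in H^0(C,\OO_C)$; in local coordinates $z_1,z_2$ coming from a chart $z$ on $C$, this amounts to
\begin{gather*}
  \eta = \frac{1+O(z_1-z_2)}{(z_1-z_2)^2}\,dz_1\wedge dz_2,
\end{gather*}
and $\sigma$-antiinvariance is automatic because $dz_1\wedge dz_2$ is antiinvariant while $(z_1-z_2)^{-2}$ is symmetric. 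A concrete model is the Bergman (fundamental) bidifferential of the second kind attached to $C$.

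For $\alpha\in H^0(S,K_S(-2\Delta))^-$, the vanishing of $\alpha$ to order two along $\Delta$ from \eqref{Iduecomesideve} cancels precisely the double pole of $\eta$, so $\eta\cdot\alpha$ is a global holomorphic section of $2K_S$. Antiinvariance of both factors makes the product invariant, hence $\eta\cdot\alpha\in H^0(S,2K_S)^+$, which by \eqref{identificami} is the correct target $S^2 H^0(C,2K_C)$ for $\rho_x$.

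The substantive step is the identification of $\eta\cdot\alpha$ with $\rho_x(\alpha)$ inside $S^2 H^0(C,2K_C)$. I would test both by pairing with a decomposable tangent vector $\xi\otimes\zeta\in S^2 T_x\Mg=S^2 H^1(C,T_C)$ via Serre duality. Intrinsically, $\sx\seconda_x(\xi\cd\zeta),\alpha\xs$ can be expressed, as in \cite{cpt}, through an iterated cup product of $\alpha$ with Kodaira--Spencer Dolbeault representatives of $\xi$ and $\zeta$ on $C$. Extrinsically, pairing $\eta\cdot\alpha$ with $\xi\otimes\zeta$ reduces, after restriction to $\Delta\cong C$, to an integral on the diagonal in which the first normal jet of $\eta$ along $\Delta$ supplies precisely the extension datum appearing in the intrinsic cup-product formula; matching the two produces the claimed identification.

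The principal obstacle is this last matching. It requires translating the Levi--Civita definition of the second fundamental form on $\siegg$ into a multiplicative formula on $S$, carefully threading Serre duality, adjunction along $\Delta$, and the isomorphisms $f$ and $f_n$ of the previous paragraphs, and verifying that the normal jet of the chosen $\eta$ along $\Delta$ indeed encodes the Kodaira--Spencer extension class of $C$ up to the relevant identifications --- this is the heart of the CPT computation. The hypothesis $g\geq 4$ enters through Noether--Enriques--Petri, which guarantees that $I_2(K_C)$ has the expected positive dimension so the multiplication formula is nontrivial.
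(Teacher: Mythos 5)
There is a genuine gap, and it is precisely at the point where the actual content of the theorem lies: the normalization of $\eta$. A $\sigma^*$-anti-invariant section of $K_S(2\Delta)$ with principal part $1$ along $\Delta$ is determined only up to adding an element of $H^0(S,K_S)^-\cong S^2H^0(C,K_C)$, and this ambiguity is not harmless: an $\alpha\in I_2(K_C)$ vanishes only to second order on $\Delta$, so replacing $\eta$ by $\eta+\beta$ with $\beta\in H^0(S,K_S)^-$ changes $\eta\cdot\alpha$ by $\beta\cdot\alpha$, which is nonzero whenever $\beta\neq 0$ and $\alpha\neq 0$ (the product of nonzero sections on the irreducible surface $S$). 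Hence ``principal part $=1$'' does not single out a multiplication map to compare with $\rho_x$, and your proposed concrete model is the wrong one: the Bergman fundamental bidifferential is normalized by vanishing $a$-periods, depends on a marking, and differs from the correct form by such a $\beta$, so multiplication by it is in general \emph{not} the second fundamental form. The form that actually occurs (\cite[Prop. 3.4]{colombo-frediani-ghigi}, and this is exactly what the paper's sketch explains) is pinned down Hodge-theoretically: for each $x\in C$ the slice of $\eta$ over $x$, viewed in $H^0(C,K_C(2x))\subset H^1(C,\mathbb{C})$, must lie in the one-dimensional intersection $H^0(C,K_C(2x))\cap H^{0,1}(C)$ and have the normalized double pole; these slices are then assembled into a holomorphic section of $p_*(K_S(2\Delta))$. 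This transcendental normalization is the key input your construction never specifies, and without it the statement you set out to prove is false for a generic $\eta$ with the prescribed polar part.

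Beyond that, the identification of $\rho_x$ with multiplication by this particular $\eta$ is only announced in your plan (``matching the two produces the claimed identification'') and delegated to \cite{cpt}; that matching, via the Hodge--Gaussian maps of \cite{cpt} and the reduction in \cite{cf1,colombo-frediani-ghigi}, is the heart of the theorem and does not follow formally from Serre duality and adjunction along $\Delta$. To be fair, the survey itself does not reprove this step either --- it cites the same references --- but it does supply the correct construction of $\eta$, which is the part your write-up replaces by an underdetermined extension-class argument plus an incorrect candidate. Two minor points: $\sigma$-anti-invariance of a lift through $0\to K_S\to K_S(2\Delta)\to K_S(2\Delta)\vert_{2\Delta}\to 0$ is not ``automatic'' from the local formula (one must symmetrize $\sigma$-equivariantly), and the hypothesis $g\geq 4$ enters only to make $I_2(K_C)$, of dimension $(g-2)(g-3)/2$ for non-hyperelliptic $C$ by Max Noether's theorem, nonzero; Petri's theorem on the generation of the canonical ideal is not needed.
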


\begin{say}
  The proof of the Theorem is rather complicated and deep. The most
  important part is in \cite{cpt}. The reduction to a multiplication
  was achived in \cite{cf1,colombo-frediani-ghigi}.  Here we will only
  give some idea about the construction of $\eta$.

  Fix a curve $C$ of genus $g$ and a point $x \in C$.  The space
  $H^0(C, K_C (2x))$ is contained in the space of closed 1-forms on
  $C\setminus \{x\}$. The induced map
  $H^0(C,K_C(2x)) \ra H^1(C \setminus \{x\},\C)$ is injective as soon
  as $g>0$. By Mayer-Vietoris
  $H^1(C,\C) \cong H^1(C\setminus \{x\}, \C)$. Hence we get an
  injection $ H^0(C,K_C(2x)) \hookrightarrow H^1(C,\C)$.  We identify
  $ H^0(C,K_C(2x))$ with its image inside $H^1(C,\C)$. The space
  $H^{1,0}(C)$ is contained in $H^0(C,K_C(2x))$. Since
  $h^0(C,K_C(2x)) = g+1$, the intersection
  $ H^0(C, K_C(2x)) \cap H^{0,1}(C)$ is a 1-dimensional.  If
  $u \in T_xC$ we choose a local coordinate at $x$ such that
  $u=\partial/\partial z$.  Then there is a unique
  $\phi_u \in H^0(C, K_C(2x))\cap H^{0,1}$ such that locally
  $ \phi_u = f(z) dz $ with $f(z)= 1/z^{2} + h(z) $ and $h$ is
  holomorphic.  Note that $\phi_u$ only depends on $u$, not on the
  coordinate $z$. (If $u=0$, set $\phi_u = 0$.)  Moreover the map
  $u \mapsto \phi_u$ is linear.  Consider the vector bundle
  $V:= p_* (q^* K_C (2\Delta))$ over $C$. We have
  $V_x = H^0(C, K_C(2x))$.  The maps $T_xC \ra V_x$,
  $u \mapsto \phi_u$ give a holomorphic section of
  $K_C \otimes V = p_* (p^*K_C \otimes q^*K_C(2\Delta)) = p_*(K_S
  (2\Delta))$,
  \cite[Prop. 3.4]{colombo-frediani-ghigi}.  But global sections of
  $ p_*(K_S (2\Delta))$ are the same as global sections of
  $K_S(2\Delta)$. Thus we get a global section of $K_S(2\Delta)$ and
  this is the form $\eta$ in the theorem!

  The definition of $\eta$ clearly involves Hodge theory. It is not
  clear how to control the behaviour of the form $\eta$ at points in
  $S - \Delta$. On the other hand the behaviour of $\eta$ along the
  diagonal is closely related to the second Wahl map, which is an
  algebraic object.

\end{say}

\section{Totally geodesic subvarieties and Hodge loci in $\Ag$}
\label{sec:siegel}

In this \S {} we are interested in totally geodesic submanifolds of
Siegel space.  The starting point is the following characterization of
totally geodesic submanifolds in symmetric spaces. (See \cite
[p. 19]{eschenburg-symspace} for a proof.)

\begin{teo}
  \label{totg} Let $X$ be a symmetric space and let $X' \subset X$ be
  a closed connected submanifold. Set $G:=\operatorname{Isom}(X)^0$.
  Fix a point $o\in X'$, set $K:=G_o$ and let
  $\lieg = \liek \oplus \liep$ be the Cartan decomposition.  The
  following conditions are equivalent.
  \begin{enumerate}
  \item $X'$ is totally geodesic.
  \item For any $x \in X'$ we have $s_x(X') = X'$.
  \item $X'=\exp_o \liem$, where $\liem \subset \liep$ is a Lie triple
    system i.e. $[[\liem, \liem],\liem ]\subset \liem$.
  \end{enumerate}
\end{teo}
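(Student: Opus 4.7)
The plan is to establish the equivalences by the chain $(3)\Rightarrow(1)\Rightarrow(2)\Rightarrow(1)\Rightarrow(3)$, with the Lie-theoretic content in $(1)\Leftrightarrow(3)$ and the symmetry-based content in $(1)\Leftrightarrow(2)$. For $(3)\Rightarrow(1)$: given a Lie triple system $\liem\subset\liep$, form $\lieg':=[\liem,\liem]\oplus\liem$; this is a Lie subalgebra of $\lieg$ because $[\liem,\liem]\subset[\liep,\liep]\subset\liek$ by the Cartan relations, $[[\liem,\liem],\liem]\subset\liem$ is the triple-system hypothesis, and $[[\liem,\liem],[\liem,\liem]]\subset[\liem,\liem]$ follows from Jacobi. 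Let $G'\subset G$ be the connected Lie subgroup with Lie algebra $\lieg'$ and put $Y:=G'\cdot o$; then $G'$ acts isometrically on $X$, so $Y$ is a homogeneous immersed submanifold with $T_oY=\liem$. For $v\in\liem\subset\liep$ the curve $t\mapsto\exp(tv)\cdot o=\exp_o(tv)$ is simultaneously a geodesic of $X$ and a curve in $Y$; by $G'$-homogeneity the analogous property holds at every $y\in Y$, so $Y$ is totally geodesic, and a polar-type decomposition of $G'$ combined with the fact that $[\liem,\liem]\subset\liek$ fixes $o$ yields $Y=\exp_o\liem$.

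For $(1)\Leftrightarrow(2)$: if $X'$ is totally geodesic, it is closed in the complete $X$ and inherits a complete Riemannian metric, so by Hopf-Rinow applied to the connected $X'$ every $y\in X'$ can be written $y=\exp_x(v)$ with $v\in T_xX'$ along a geodesic contained in $X'$; then $s_x(y)=\exp_x(-v)\in X'$, and involutivity of $s_x$ gives $s_x(X')=X'$. Conversely, assuming $(2)$, the ambient isometry $s_x$ preserves $X'$, and the second fundamental form satisfies the equivariance
\[
(ds_x)_x\,\operatorname{II}_x(u,v)=\operatorname{II}_x\bigl((ds_x)_x u,(ds_x)_x v\bigr).
\]
Since $(ds_x)_x=-\operatorname{id}$ on all of $T_xX$ (tangent and normal directions alike), this collapses to $-\operatorname{II}_x(u,v)=\operatorname{II}_x(u,v)$, forcing $\operatorname{II}\equiv 0$, so $X'$ is totally geodesic.

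For $(1)\Rightarrow(3)$: put $\liem:=T_oX'$. Total geodesy implies that the ambient Levi-Civita connection restricts to a connection on $X'$, so the ambient curvature takes $\liem$-valued inputs to $\liem$, i.e.\ $R(\liem,\liem)\liem\subset\liem$; the symmetric-space identity $R(u,v)w=-[[u,v],w]$ on $\liep$ translates this into $[[\liem,\liem],\liem]\subset\liem$, so $\liem$ is a Lie triple system. Applying $(3)\Rightarrow(1)$ to this $\liem$ produces a complete connected totally geodesic $Y=\exp_o\liem$ with $T_oY=T_oX'$; every $y\in X'$ is reached from $o$ by an ambient geodesic with initial velocity in $\liem=T_oY$, and such a geodesic stays in $Y$ by its total geodesy, so $X'\subset Y$; symmetry of the argument gives $X'=Y$. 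The main obstacle is the passage from the algebraic datum $\lieg'$ to the geometric object $Y=G'\cdot o$ and its identification with $\exp_o\liem$ in $(3)\Rightarrow(1)$: one must invoke the Cartan/polar decomposition of the reductive group $G'$ and verify that its orbit $G'\cdot o$ is a genuine immersed submanifold of $X$ with isotropy at $o$ corresponding exactly to $[\liem,\liem]\subset\liek$; the remaining ingredients—the Cartan relations, Jacobi, and the curvature formula for symmetric spaces—are standard and provide the conceptual bridge between total geodesy, reflection invariance, and the Lie triple condition.
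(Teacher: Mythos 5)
The paper itself gives no proof of this theorem (it simply points to Eschenburg's lecture notes), and your argument is exactly the standard one found there: build $\lieg'=[\liem,\liem]\oplus\liem$ and show its orbit through $o$ is totally geodesic for $(3)\Rightarrow(1)$, use curvature-invariance of $T_oX'$ together with $R(u,v)w=-[[u,v],w]$ on $\liep$ for $(1)\Rightarrow(3)$, and use $(ds_x)_x=-\operatorname{id}$ plus completeness of the closed submanifold $X'$ for $(1)\Leftrightarrow(2)$ --- so your proposal is correct and essentially the same approach as the cited proof. The only step worth tightening is the identification $G'\cdot o=\exp_o\liem$: rather than invoking a polar decomposition of $G'$ (which is delicate since the connected subgroup $G'$ need not be closed in $G$), observe that the orbit is a homogeneous, hence complete, totally geodesic submanifold with $T_o(G'\cdot o)=\liem$, so Hopf--Rinow joins any of its points to $o$ by a geodesic of the orbit, which is an ambient geodesic with initial velocity in $\liem$, giving the desired equality directly.
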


\begin{say}\label{say-siegel}
  We recall the definition and the main properties of Siegel space.
  Fix a real symplectic vector space $(V,\om)$ of dimension $2g$.  If
  $J$ is a complex structure on $V$, we let $V_J$ denote the complex
  vector space obtained using $V$ as underlying real vector space and
  letting multiplication by $i$ act as $J$.  We have $J^*\om = \om$ if
  and only if the bilinear form $g_j:= \om (\cd, J\cd )$ is symmetric.
  Set
  \begin{gather*}
    \sieg(V,\om) := \{ J\in \End V: J^2 = -\id_V, J^*\om=\om, \\ g_J
    \text{ is positive definite}\}.
  \end{gather*}
  If $J\in \siegv$, then $H_J (x,y):= g_J(x,y) - i \om (x,y)$ is a
  Hermitian product on $V_J$.  The group $\Spv$ acts on $\siegv$ by
  conjugation: $a\cd J :=a J a\meno$.  We claim that this action is
  transitive. Indeed let $J, J'\in \siegv$.  Fix an $H_J$-unitary
  basis $\{u_i\} $ of $V_J$. Similarly let $\{u'_i\}$ be an
  $H_{J'}$-unitary basis of $V_{J'}$. Setting $b (u_i) : = u'_i$ we
  get a complex isometry $ b : V_J \ra V_{J'}$, so $b^*H_{J'} = H_J$.
  Let $a \in \End V$ denote the underlying real isomorphism. It
  satisfyes $a^*\om = - a^*(\Im H_{J'}) = -\Im H_J = \om$ and
  $a J = J' a$. Thus $a\in \Spv$ and $a\cd J = J'$. This proves the
  claim.  It is easy to check that if $a\in \Spv$, then $a \cd J = J$
  if and only if $a^*H_J = H_J$. It follows that the stabilizer of $J$
  in $\Spv$ is the unitary group $ \operatorname{U}(V_J,H_J)$.  For
  $L \in \End V$, let $L^{T_J} $ be the transposed operator with
  respect to the scalar product $g_J$. We claim that for $a\in \Spv$
  \begin{gather}
    \label{tetaj}
    a^{T_J} = -J a\meno J = J a\meno J\meno.
  \end{gather}
  This follows easily using $\om = g_J (J\cd, \cd )$ and
  $a^*\om =\om $.  Thus
  \begin{gather*}
    \theta_J(a) : = (a\meno)^{T_J} = J a J\meno,
  \end{gather*}
  is a Cartan involution on $\Sp(V,\om)$, At the Lie algebra level
  $\theta_J = \Ad J$.  The $1$-eigenspace of $\theta_J$ on $\spv$ is
  $\mathfrak{u}(V_J,H_J)$. This show that $\siegv$ is a symmetric
  space, see \cite[p. 209]{helgason}.

\end{say}

\begin{say}
  Up to now we only used the symplectic vector space $(V,\om)$.  Now
  we add an extra structure, namely we fix a lattice $\La \subset V$
  and we assume that $\om ( \La \times \La ) \subset \Zeta$.  In
  appropriate bases of $\La$ the matrix of $\om$ has the form
  \begin{gather*}
    \begin{pmatrix}
      0 & T \\ -T & 0
    \end{pmatrix} \quad \text{ with } T=\operatorname {diag} (d_1,
    \lds, d_g)
  \end{gather*}
  and $0 < d_1 | d_2 | \cds |d_g$, see
  e.g. \cite[p. 391]{freitag-funk-2}. The vector $(d_1, \lds, d_g)$ is
  called the \emph{type} of the form $\om$. Our final assumption is
  that $\om$ has type $(1, \lds, 1)$.  The group
  $\Ga:= \Sp (\La, \om) $ is a discrete subgroup of $\Spv$ and acts
  properly discontinuously on $\siegl$. Set
  \begin{gather*}
    \Ag:= \Ga \us \siegv.
  \end{gather*}
  By a theorem of H. Cartan this quotient is a normal complex analytic
  space. It has also a natural structure of complex analytic
  orbifold. It can be shown that it is a quasiprojective variety.
  Since $\Ga $ is a group of isometries, the symmetric Riemannian
  structure on $\siegv$ induces a locally symmetric orbifold metric on
  $\Ag$, which we call \emph{Siegel} metric.
\end{say}

\begin{say} \label{saymt} Once the lattice $\La \subset V$ has been
  fixed there is a natural (and tautological) $\Zeta$-variation of
  Hodge structure on $\siegv$: take the constant lattice $\La$ and for
  $J\in \siegv$ consider the Hodge structure $(\La, V_J^{1,0})$.
  Since this Hodge structure only depends on $J$, we denote it simply
  by $J$.  This variation of Hodge structure descends to an (orbifold)
  variation over $\Ag$.  We are interested in the Hodge loci of this
  variation on $\Ag$.  For the main definitions and facts regarding
  Hodge loci and Mumford-Tate groups we refer to
  \cite{moonen-oort,schnell-trento,voisin-hodge-loci}.  Here we recall
  what we need only in the case of weight 1.

  Given $J\in \siegv$ we define a representation
  $\rho_J : \C^* \ra \Gl(V)$, setting $\rho_J(z) v = z \cd v$ for
  $v \in H^{1,0}$ and $\rho_J(z) v = \bar{z} \cd v$ for
  $v \in H^{0,1}$.  The \emph{Mumford-Tate group} of $J$, denoted
  $\mt(J)$, is the smallest algebraic subgroup of $\Gl(V)$ defined
  over $\QQ$, whose real points contain $\im \rho_J$.  The main
  property of the Mumford-Tate group is the following: given
  multi-indices $d, e \in \mathbb{N}^m$ set
  \begin{gather*}
    T^{d,e} (\La_\QQ):= \oplus_{j=1}^m \La_\QQ^{\otimes d_j} \otimes
    (\La_\QQ^*)^{\otimes e_j}.
  \end{gather*}
  This space is a pure Hodge structure.  A vector
  $v \in T^{d,e}(\La_\QQ)$ is invariant by $\mt(H) $ if and only if it
  is a Hodge class of type $(0,0) $, i.e a rational vector of type
  (0,0).  Moreover the Mumford-Tate group is characterized by this
  property in the following sense: if $G \subset \Gl(H)$ is the
  subgroup containing the elements that fix the Hodge classes of
  $T^{d,e}$ for any $d$ and $e$, then $G=\mt(H)$, see e.g.
  \cite{schnell-trento}.  The \emph{Hodge group} or \emph{special
    Mumford-Tate group}, denoted $\smt(J)$ is the smallest algebraic
  subgroup of $\Gl(V)$ defined over $\QQ$, whose real points contain
  $ \rho_J(S^1)$.  If $D$ denotes the subgroup of diagonal matrices in
  $\Gl(V)$, then $\mt(J) = D \cd \smt(J)$.

\end{say}

\begin{lemma}
  \label{Hgg}
  For any $J\in \siegv$, the following properties hold.
  \begin{enumerate}
  \item $J \in \Hg(J) \cap \Lie \Hg(J)$.
  \item $\smt( J)$ is invariant by $\theta_J$.
  \item The stabilizer of $J $ in $\smt(J)$ coincides with the
    centralizer of $J$ in $\smt(J)$ and it is a maximal compact
    subgroup of $\smt(J)$.
  \item The orbit $\smt(J)\cd J \subset \siegv$ is a complex totally
    geodesic submanifold of $\siegv$.  With the induced metric it is a
    Hermitian symmetric space of the non-compact type.
  \end{enumerate}
\end{lemma}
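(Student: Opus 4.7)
The plan is to translate each of the four assertions into a Lie-theoretic statement about $\smt(J)$ and its interaction with the Cartan decomposition of $\spv$. For (1) I would unpack $\rho_J$ on the real vector space $V$: splitting $V\otimes_\R\C = H^{1,0}\oplus H^{0,1}$ and using that $\rho_J(z)$ acts as $z$ on $H^{1,0}$ and as $\bar z$ on $H^{0,1}$, a short calculation shows that on $V$ itself one has $\rho_J(e^{i\theta}) = (\cos\theta)\id_V + (\sin\theta)\, J$. Specializing to $\theta = \pi/2$ gives $J = \rho_J(i) \in \im\rho_J \subset \smt(J)(\R)$, and differentiating at $\theta = 0$ gives $J \in \Lie\smt(J)$. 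Part (2) then follows immediately: once $J \in \smt(J)$, conjugation by $J$ is an inner automorphism of $\smt(J)$ and so preserves it, but by \eqref{tetaj} this conjugation is precisely the restriction of $\theta_J$ to $\Spv$.

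For (3), the stabilizer of $J$ under the conjugation action of $\smt(J)$ equals $\{a \in \smt(J) : aJa\meno = J\} = \{a : aJ = Ja\}$, which is visibly the centralizer of $J$ in $\smt(J)$ and, via $\theta_J(a) = JaJ\meno$, also coincides with the fixed set $\smt(J)^{\theta_J}$. Since the $\theta_J$-fixed subgroup of $\Spv$ is the compact unitary group of $(V_J, H_J)$, this stabilizer is automatically compact; maximal compactness then follows from the standard principle that the restriction of a Cartan involution to a $\theta$-stable reductive subgroup is again a Cartan involution. For (4), write the induced decomposition as $\Lie\smt(J) = \liek' \oplus \liep'$, so $\liep' = \Lie\smt(J) \cap \liep$. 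As the $-1$-eigenspace of an involution on a Lie subalgebra, $\liep'$ is automatically a Lie triple system, so Theorem \ref{totg} identifies $\exp_J(\liep') = \smt(J) \cd J$ as a totally geodesic submanifold of $\siegv$. For complexness, note that the almost complex structure on $T_J\siegv = \liep$ is a scalar multiple of $\ad J$, which preserves $\Lie\smt(J)$ since $J \in \Lie\smt(J)$ by (1), and therefore preserves $\liep'$. Combining the induced Riemannian structure on $\smt(J)/K'$ with this complex structure gives the Hermitian symmetric structure, and the non-compact type is inherited from $\siegv$.

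I expect the main obstacle to be the structural input on $\smt(J)$ tacitly used in (3) and (4): that $\smt(J)$ is reductive (needed for $\theta_J\restr{\smt(J)}$ to be a Cartan involution in the classical sense) and that its identity component is semisimple with no compact factor (needed to conclude non-compact type rather than merely non-positive curvature). Both are standard consequences of the polarization on the weight-one Hodge structure and would be invoked as black boxes rather than reproved here.
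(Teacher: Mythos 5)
Your proposal is correct and follows essentially the same route as the paper: the explicit formula $\rho_J(e^{it})=\cos t\,\id_V+\sin t\,J$ for (1), conjugation by $J$ via \eqref{tetaj} for (2), identifying the stabilizer with the fixed set of $\theta_J$ restricted as a Cartan involution for (3), and the $(-1)$-eigenspace as a Lie triple system plus $\ad J$-invariance together with Theorem \ref{totg} for (4). The structural facts you flag (reductivity of $\smt(J)$, so that $\theta_J$ restricts to a Cartan involution) are likewise taken as known in the paper's proof, so your treatment matches it in both substance and level of detail.
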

\begin{proof}
  $J \in \rho_J(S^1) \subset \smt(J)$. Moreover
  $\rho_J(e^{it}) = \cos t \cd I + \sin t \cd J$. Thus
  $J = d\rho (0) (i) \in \Lie \smt(J)$. This proves (1).  To prove (2)
  use \eqref{tetaj} and the fact that $J \in \Hg(J)$: for
  $a \in \smt(J)$, $\theta_J(a) = Ja\meno J\meno \in \smt(J)$.  The
  restriction of $\theta_J$ to $\Hg(J)$ is a Cartan involution on
  $\smt(J)$.  If $a\in \Sp(V, \om)$, then $\theta_J(a) = a $ iff
  $aJ=Ja$ iff $a\cd J = J$.  Thus the stabilizer of $J$ in $\smt(J)$
  is the fixed set of $\theta_J$ in $\smt(J)$, which is a maximal
  compact subgroup. This proves (3).  Set $\lieg :=\Lie \Hg(J)$ and
  let $\lieg = \liek \oplus \liep$ be the Cartan decomposition
  corresponding to $\theta_J$. Then $[\liep, \liep] \subset \liek$ and
  $[\liek, \liep] \subset \liep$. Thus $\liep$ is a Lie triple system
  and $\Hg(J) \cd J = \exp_J \liep$ is a totally geodesic submanifold
  by Theorem \ref {totg}.  It is complex since $\ad J$ preserves
  $\liep$.
\end{proof}

\begin{say}
  \label{locisay}
  The {Hodge loci} of the natural variation of Hodge structure on
  $\Ag$ are defined as follows.  Given $d, e$ and
  $t\in T^{d,e}(\La_\QQ)$, set
  \begin{gather*}
    Y(t):=\{ J \in \siegv: t \in T^{d,e}(V_J)^{0,0} \}.
  \end{gather*}
  $Y(t)$ is an analytic subset of $\siegv$, see
  \cite[p. 404]{voisin-libro}. If $t_1, \lds, t_r$ are rational
  vectors in various tensor constructions, set
  $Y(t_1, \lds, t_r) := Y(t_1) \cap \cds \cap Y(t_r)$.  We call
  $Y(t_1, \lds, t_r)$ \emph{proper} if
  $Y(t_1, \lds, t_r) \neq \siegv$.  Let \begin{gather*} \pi: \siegv
    \lra \Ag,
  \end{gather*}
  denote the canonical projection.  A \emph{Hodge locus} of $\Ag$ is
  an irreducible component of a proper
  $\pi (Y(t_1, \lds , t_r)) \subsetneq \Ag$.  It is easy to check that
  Hodge loci are exactly the subsets of the form $\pi(Z)$, where $Z$
  is an irreducible component of some proper $ Y(t_1, \lds, t_r)$.
  The irreducible components of proper subsets $ Y(t_1, \lds, t_r) $
  form a countable family $\{Z_i\}_{i\in \mathbb{N}}$ of proper
  subsets of $ \siegv$.  Set further
  \begin{gather*}
    Z_i^0 := Z_i \setminus \bigcup_{j: Z_i \not \subset Z_j} Z_j.
  \end{gather*}
\end{say}

\begin{teo}
  \label{teoZi}
  For $J \in Z_i^0$ we have $Z_i = \smt(J)\cd J$.  In particolar $Z_i$
  is a totally geodesic submanifold of $\siegv$.
\end{teo}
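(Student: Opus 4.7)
The strategy is to prove the two inclusions $\Hg(J) \cdot J \subseteq Z_i$ and $Z_i \subseteq \Hg(J) \cdot J$ separately. Once they are established, the fact that $Z_i$ is totally geodesic is immediate from Lemma \ref{Hgg}(4).

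For the first inclusion, the key observation is that $t_1, \ldots, t_r$ are Hodge classes of type $(0,0)$ at $J$, and by the characterization of the Mumford--Tate group recalled in \ref{saymt} they are fixed by $\mt(J)$, hence by $\Hg(J) \subset \mt(J)$. For $g \in \Hg(J)$ the Hodge decomposition at $g \cdot J = gJg\meno$ is the $g$-translate of the one at $J$, so a rational tensor $s$ is Hodge at $g\cdot J$ iff $g\meno \cdot s$ is Hodge at $J$; applied to $s = t_k$ this gives $g \cdot J \in Y(t_1, \ldots, t_r)$, so $\Hg(J) \cdot J \subseteq Y(t_1, \ldots, t_r)$. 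The orbit is connected and passes through $J \in Z_i^0$; since any other irreducible component $Z_j$ of $Y(t_1, \ldots, t_r)$ satisfies $Z_i \not\subset Z_j$, the definition of $Z_i^0$ yields $J \notin Z_j$, and the orbit must lie in $Z_i$.

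For the reverse inclusion I would work at the tangent level. A first-order deformation computation shows that for any rational tensor $s$,
\[
T_J Y(s) = \{\dot{J} \in T_J \siegv : \dot{J} \cdot s = 0\},
\]
where $\dot{J}$ acts on tensors by the induced derivation. By the characterization of $\Hg(J)$, its Lie algebra $\Lie \Hg(J) \subset \spv$ is the intersection, over all rational Hodge tensors $s$ at $J$, of the Lie algebras of their stabilizers in $\Sp(V,\om)$. Since $\spv$ is finite-dimensional this intersection stabilizes: there are finitely many Hodge tensors $s_1, \ldots, s_n$ at $J$, which I may take to contain $t_1, \ldots, t_r$, such that $T_J Y(s_1, \ldots, s_n) = \liep_J \cap \Lie \Hg(J) = T_J(\Hg(J) \cdot J)$. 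Because $Y(s_1, \ldots, s_n) \subseteq Y(t_1, \ldots, t_r)$, the irreducible component of $Y(s_1, \ldots, s_n)$ through $J$ is contained in $Z_i$, and again the definition of $Z_i^0$ forces it to equal $Z_i$; hence $Z_i \subseteq Y(s_1, \ldots, s_n)$ and $T_J Z_i \subseteq T_J(\Hg(J) \cdot J)$. Combined with the first inclusion, $T_J Z_i = T_J(\Hg(J) \cdot J)$.

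Now $\Hg(J) \cdot J$ is a smooth complex submanifold of $\siegv$ contained in $Z_i$, sharing the same tangent space at $J$ and hence the same dimension there; the two coincide in a neighborhood of $J$. Both are irreducible closed analytic subsets of $\siegv$ (the orbit is closed because $\Hg(J)$ is an algebraic subgroup of $\Sp(V,\om)$ and the isotropy $K$ is compact), and two such sets agreeing on a non-empty open subset must coincide. This yields $Z_i = \Hg(J) \cdot J$, and the ``in particular'' statement then follows from Lemma \ref{Hgg}(4). The main obstacle is the tangent space identification $T_J Z_i = T_J(\Hg(J) \cdot J)$: it requires both the characterization of $\Hg(J)$ as the simultaneous stabilizer in $\Sp(V,\om)$ of all Hodge tensors at $J$ and the rigidity provided by $J \in Z_i^0$, which prevents extra Hodge classes at $J$ from cutting out a strictly smaller Hodge locus through $J$.
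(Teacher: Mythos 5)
Your proof is correct in substance, but it takes a genuinely different route from the paper's. The paper argues globally: it first shows that the set of Hodge classes, hence the Hodge group $G=\smt(J_0)$, is constant along $Z_i^0$; it then proves that the connected component of $\{J\in\siegv:\smt(J)=G\}$ containing $Z_i^0$ lies in the orbit $G\cd J_0$, by observing that for such $J$ the stabilizer $G_J$ is a maximal compact subgroup conjugate to $K=G_{J_0}$, that $J$ is central in $G_J$, and that $Z(K)\cap\siegv$ is \emph{finite}, so connectedness forces $a\meno J a=J_0$; combined with $G\cd J_0\subset Z_i$ and $\overline{Z_i^0}=Z_i$ this yields the equality. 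You instead replace this global step by an infinitesimal one, in the spirit of Moonen's linearity results: each $Y(s)$ is cut out in $\siegv$ by the condition that the derivation action of $J$ annihilates $s$, which is linear in $J$, so every tangent (indeed secant) direction of $Z_i$ at $J$ annihilates the chosen tensors; picking finitely many Hodge tensors whose common stabilizer is $\mt(J)$ and using $J\in Z_i^0$ to identify $Z_i$ with the component through $J$ of the smaller Hodge locus, you bound $\dim_J Z_i$ by $\dim\bigl(\liep_J\cap\Lie\Hg(J)\bigr)=\dim\bigl(\Hg(J)\cd J\bigr)$, and the orbit inclusion (which is essentially the paper's third step) then forces equality. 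Your argument is valid and has the merit of needing only the definition of $Z_i^0$ at the single point $J$, not the constancy of Hodge groups along $Z_i^0$ nor the finiteness of $Z(K)\cap\siegv$; the paper's argument, in turn, avoids any tangent-space computation. A few points should be tightened: you may only claim, and only need, the inclusion $T_JY(s)\subseteq\{\dot J:\dot J\cd s=0\}$, since $Y(s)$ can be singular at $J$; the orbit must be invoked as an \emph{irreducible} (not merely connected) analytic subset in order to place it inside a single component $Z_i$; closedness of $\Hg(J)\cd J$ is best quoted from Lemma \ref{Hgg} (it is a complete totally geodesic submanifold of the Hadamard manifold $\siegv$), since ``algebraic subgroup with compact isotropy'' is not by itself a proof; and the conclusion is cleaner as a dimension count --- $\Hg(J)\cd J\subseteq Z_i$ are irreducible closed analytic sets with $\dim Z_i\le\dim T_JZ_i\le\dim\Hg(J)\cd J$, hence they coincide --- rather than via ``agreement on a neighborhood'', which would require ruling out extra local branches of $Z_i$ at $J$.
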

\begin{proof}
  We claim that the set of Hodge classes is constant on
  $Z_i^0$. Indeed let $x, y \in Z_i^0 $ be distinct points. Assume
  that $Z_i$ is a component of $Y(t_1, \lds, t_r)$ and assume by
  contradiction that there is $t_{r+1}$ that is a Hodge class at $x$,
  but not at $y$. Let $Z' $ be the irreducible component of
  $Y(t_1, \lds, t_r,t_{r+1})$ containing $x$.  Then $y \not \in Z'$,
  so $Z'$ is among the $Z_j$'s with $Z_i \not \subset Z_j$. But then
  $x \not \in Z_i^0$.  This proves the claim.  By the property
  mentioned in \ref{saymt} we conclude that the Mumford-Tate and Hodge
  groups are constant on $Z_i^0$.  Fix $J_0\in Z_i^0$ and set
  $G:=\smt(J_0)$. We just proved that
  \begin{gather*}
    Z_i^0 \subset Y:= \{ J \in \siegv : \smt(J) = G\}.
  \end{gather*}
  Set $K:= G_{J_0}$ and $A:=Z(K)$. We claim that $A \cap \siegv$ is a
  finite set.  Indeed $A$ is abelian and acts unitarily and faithfully
  on $(V_{J_0}, H_{J_0})$. Hence $V_{J_0}$ splits in one-dimensional
  subrepresentations $V_i$ on which $A$ acts by a character
  $\chi_i$. But if $J \in A\cap \siegv$, then $J^2 = -I$, so
  $\chi_i (J)= \pm i$ for any $i$.  Since the representation is
  faithful this shows that $A \cap \siegv$ is finite.

  Since $Z_i$ is irreducible, $Z_i^0$ is connected.  Let $Y'$ be the
  connected component of $Y$ that contains $Z_i^0$.  We claim that
  \begin{gather}
    \label{zweiter-claim}
    Y' \subset G\cd J_0.
  \end{gather}
  Indeed let $J $ be a point in $Y'$. Then $\smt(J)=G$. So $G$ is
  $\theta_J$-invariant and the stabilizer $G_J$ is a maximal compact
  subgroup of $G$. Since $G$ is connected there is $a \in G$ such that
  $a\meno G_J a = K$. Moreover $J$ belongs to the center of $G_J$,
  hence $a\meno J a $ belongs to the center of $K$. This shows that
  $ a\meno J a \in A\cap \siegv$, which is a finite set. Since $Y'$ is
  connected we have necessarily $a\meno J a = J_0$, i.e.
  $J = a \cd J_0$. This proves \eqref{zweiter-claim}.

  Finally we claim that $G\cd J_0 \subset Z_i$.  Assume that
  $J= a\cd J_0$ with $a\in G$.  Then $J = \Ad(a) (J_0) \in \lieg$, so
  $\rho_J(e^{it}) \in G$ for any $t$.  Hence $\smt(J) \subset G$.  By
  assumption $Z_i$ is an irreducible component of some proper subset
  $Y(t_1, \lds, t_r)$.  Then by \ref{saymt} we have
  \begin{gather*}
    \mt(J_0) = \{a \in \Gl(\La_\QQ): a \cd t_j = t_j \text{ for }
    j=1,\lds, r\}.
  \end{gather*}
  Since $\smt(J) \subset G$, $\mt(J) \subset \mt(J_0)$, so
  $J \in Y(t_1, \lds, t_r)$.  We have proved that
  $G\cd J_0 \subset Y(t_1, \lds, t_r)$.  Since $G\cd J_0$ is
  irreducible, we have in fact $G\cd J_0 \subset Z_i$.

  We have proved the inclusions
  \begin{gather*}
    Z_i^0 \subset Y' \subset G\cd J_0 \subset Z_i.
  \end{gather*}
  Since $G\cd J_0$ is a closed subset of $\siegv$ and
  $\overline{Z_i^0 } = Z_i$, we conclude that $G\cd J_0 =Z_i$ as
  desired.  The last statement follows from Lemma \ref{Hgg} (4).
\end{proof}

  \begin{defin}
    A \emph{totally geodesic subvariety} of $\A_g$ is a closed
    algebraic subvariety $Z \subset \A_g$, such that $Z = \pi (X)$ for
    some totally geodesic submanifold $X \subset \siegv$.
  \end{defin}

\begin{cor}
  Hodge loci of $\Ag$ are totally geodesic subvarieties.
\end{cor}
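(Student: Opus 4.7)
The plan is to unwind the definition of a Hodge locus, apply Theorem~\ref{teoZi} to obtain the totally geodesic structure in $\siegv$, and invoke the Cattani--Deligne--Kaplan theorem to get algebraicity downstairs.

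First, let $H \subset \Ag$ be a Hodge locus. By the equivalent characterization recalled in~\ref{locisay}, we have $H = \pi(Z_i)$ for some irreducible component $Z_i$ of a proper analytic subset $Y(t_1, \ldots, t_r) \subsetneq \siegv$ in the countable family enumerated there. Nothing is lost by choosing a specific such $i$; the Hodge locus is canonically determined by the component.

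Next, Theorem~\ref{teoZi} identifies $Z_i$ with the orbit $\smt(J) \cdot J$ for any $J \in Z_i^0$, and states in particular that $Z_i$ is a totally geodesic submanifold of $\siegv$ (ultimately via Lemma~\ref{Hgg}(4)). Setting $X := Z_i$, we therefore have $H = \pi(X)$ with $X$ totally geodesic, which is exactly the geometric half of the definition of a totally geodesic subvariety of $\Ag$.

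It remains to verify that $H$ is a closed algebraic subvariety of $\Ag$. Closedness in the analytic topology is built into the construction: $Y(t_1, \ldots, t_r)$ is analytic in $\siegv$ and $H$ is the image under $\pi$ of one of its irreducible components, saturated by $\Ga$. Algebraicity, on the other hand, is a serious input, and it is precisely the content of the celebrated theorem of Cattani--Deligne--Kaplan on algebraicity of Hodge loci, which we cite. Once this is in hand, $H$ meets both requirements of the definition and the corollary follows.

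The main obstacle is therefore not geometric but transcendental: the totally geodesic part of the conclusion is essentially a rephrasing of Theorem~\ref{teoZi}, while the algebraicity of analytically defined Hodge loci is the genuinely deep ingredient.
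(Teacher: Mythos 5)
Your proposal is correct and follows essentially the same route as the paper: it uses the observation in \ref{locisay} that a Hodge locus is $\pi(Z_i)$ for an irreducible component $Z_i$ of a proper $Y(t_1,\ldots,t_r)$, applies Theorem \ref{teoZi} (via Lemma \ref{Hgg}) to get that $Z_i$ is totally geodesic in $\siegv$, and invokes Cattani--Deligne--Kaplan for algebraicity. Nothing further is needed.
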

\begin{proof}
  Let $W \subset \Ag$ be a Hodge locus.  By the theorem of
  Cattani-Deligne-Kaplan \cite{aroldo} $W$ is a closed algebraic
  subset of $\Ag$.  As noticed in \ref {locisay} $W=\pi (Z_i)$ for
  some $Z_i \subset \siegv$ for some irreducible component $Z_i$.  By
  Theorem \ref{teoZi} $Z_i$ is a totally geodesic submanifold of
  $\siegv$.
\end{proof}

Hodge loci of $\Ag$ are also called \emph{special subvarieties} or
\emph{Shimura subvarieties}.  A CM point of $\Ag$ is by definition a
moduli point $[A]$, where $A$ is an abelian variety with complex
multiplication, see e.g. \cite{mumford-Shimura}.  This condition is of
arithmetic nature.  Shimura varieties always contain CM points. This
condition in fact characterises them among totally geodesic
subvariety, see \cite{mumford-Shimura,moonen-linearity-1}.  For this
reason Shimura varieties play a prominent role in arithmetic algebraic
geometry. We say that a subvariety $Z \subset \Ag$ is
\emph{generically contained} in $j(\Mg)$, if
$Z \subset \overline{j(\Mg)}$ and $Z \cap j(\Mg) \neq \vacuo$.  The
following conjecture is rather important in arithmetic algebraic
geometry.
\begin{conj}
  [Coleman-Oort] For large $g$ there is no Shimura variety
  $Z \subset \Ag$ generically contained in $j(\Mg)$.
\end{conj}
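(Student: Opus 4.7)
The statement is a notoriously open conjecture, so any ``proof proposal'' is necessarily speculative; still, the machinery recalled in this survey suggests a concrete line of attack, and I describe how I would try to push it.

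\textbf{Strategy.} The plan is to argue by contradiction. Suppose $Z \subset \Ag$ is a Shimura subvariety generically contained in $j(\Mg)$, and let $x=[C]\in Z\cap j(\Mgs)$ be a smooth point of both, with $C$ non-hyperelliptic (possible for large $g$ by genericity). Since $Z$ is a Hodge locus, Theorem \ref{teoZi} identifies a local lift of $Z$ with $\smt(J)\cd J\subset \siegv$, which is totally geodesic. Totally geodesic submanifolds of $\Ag$ contained in $j(\Mgs)$ must have their tangent space annihilated by the second fundamental form $\seconda_x$; more precisely, if $T_xZ\subset T_x\Mg\cong H^1(C,T_C)$ is the tangent space, then $\seconda_x$ must vanish on $S^2 T_xZ$. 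This translates, via the identification \eqref{ssd} and the theorem about $\eta$, into the following vanishing: for every $\xi \in I_2(K_C)$ and every $\mu \in S^2 T_xZ$, the section $\eta \cdot \xi \in H^0(S,2K_S)^+$ must pair trivially with $\mu$ under Serre duality.

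\textbf{Key steps.} First I would translate the condition that $T_xZ$ is annihilated by $\seconda_x$ into an algebraic statement on $\cc = C\times C$, using the factorization $\rho_x(\xi) = \eta\cdot\xi$ and the exact sequence $0 \to I_2(K_C)\to S^2H^0(C,K_C)\to H^0(C,2K_C)\to 0$. This would reduce the problem to showing that, for $g$ large, no sufficiently large subspace $W\subset S^2H^0(C,K_C)$ can be a ``multiplier'' direction for $\eta$ in the sense that $\eta \cdot I_2(K_C)$ vanishes against $W$. Next I would combine this with dimension constraints: the codimension of $j(\Mg)$ in $\Ag$ grows like $\binom{g}{2}-2g$ while $\dim j(\Mg)=3g-3$, so a Shimura subvariety $Z\subset j(\Mg)$ must have quite specific tangent directions. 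Using the description of $\eta$ as built from the residues $\phi_u \in H^0(C,K_C(2x))\cap H^{0,1}$, I would then examine the restriction of $\eta$ along the diagonal $\Delta$, which by the discussion at the end of \S \ref{seconda} is governed by the second Wahl (Gaussian) map, whose rank behaviour for general curves of large genus is an algebraic object amenable to estimates.

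\textbf{Finishing step.} Granted such bounds, the dimension $\dim Z$ of any hypothetical Shimura subvariety contained in $j(\Mg)$ would be forced to be $o(g)$, contradicting the classification of Shimura subvarieties of $\Ag$ (for which one has explicit lower bounds on dimension once the Hodge group is non-trivial). One would then need to rule out the finitely many remaining low-dimensional types (e.g.\ Shimura curves) by an independent argument, perhaps through a careful asymptotic analysis of $\eta$ combined with the fact that, for large $g$, the boundary behaviour in the Baily--Borel or Deligne--Mumford compactifications of such a $Z$ would produce geometrically impossible degenerations of curves.

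\textbf{The main obstacle.} The step I expect to be essentially unreachable with current technology is the uniform control of $\eta$: the behaviour of $\eta$ off the diagonal involves Hodge theory in an unwieldy way, and the surveyed results using the second fundamental form have so far produced non-existence only for restricted classes of totally geodesic subvarieties, not for all Shimura subvarieties. In effect, carrying the program through would require either a sharp rigidity theorem for multiplication by $\eta$, or bringing in genuinely arithmetic input (à la André--Oort, Pila--Zannier) that goes beyond the differential-geometric framework of this note; this is why the statement remains a conjecture.
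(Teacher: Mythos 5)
The statement you were asked about is not a theorem of the paper: it is the Coleman--Oort conjecture, stated as a \emph{conjecture} and left open, so there is no proof in the paper to compare against. You correctly recognise this, and your sketch is broadly aligned with the circle of ideas the survey actually uses: vanishing of the second fundamental form $\seconda_x$ on tangent directions of a totally geodesic $Z\subset j(\Mgs)$, the factorization $\rho_x(\xi)=\eta\cdot\xi$ on $\cc=C\times C$, and the link between $\eta\restr{\Delta}$ and the second Wahl map. This is exactly the machinery behind the dimension bounds recalled in the paper ($\dim Y\leq 2g+k-4$ for $k$-gonal curves, hence $\dim Y\leq \frac{5}{2}(g-1)$), and your honest assessment of where the method stalls (no control of $\eta$ away from the diagonal) matches the state of the art described there.

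However, as a proof attempt it has a concrete gap beyond the admitted analytic difficulties: the finishing step is unsound. You propose to derive a contradiction from ``explicit lower bounds on the dimension of Shimura subvarieties of $\Ag$'' once $\dim Z$ is forced to be $o(g)$. No such lower bound exists: $\Ag$ contains Shimura curves (one-dimensional special subvarieties) for every $g$, and more generally special subvarieties of every small dimension, so bounding $\dim Z$ from above by any function of $g$ cannot by itself contradict $Z$ being a Shimura variety. This is precisely why the known second-fundamental-form bounds, which are linear in $g$, only ``agree with'' the conjecture rather than prove it, and why the low-dimensional cases (Shimura curves in $j(\Mg)$ for large $g$) are the heart of the problem and require arithmetic input (as in the works of Lu--Zuo, Chen--Lu--Zuo cited in the survey) rather than a purely local differential-geometric argument. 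So your proposal should be read as a research programme consistent with the survey's viewpoint, not as a proof, and its final reduction would fail as stated.
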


\begin{say}

  Using the results on the second fundamental form one can get some
  constraints on the existence of totally geodesic subvarieties of
  $\A_g$ contained in $\M_g$. Since these methods are of local nature,
  the results apply to analytic germs of such subvarieties.
\end{say}

\begin{teo}
  Assume that $C$ is a $k$-gonal curve of genus $g$ with $g\geq 4$ and
  $k\geq 3$. Let $Y$ be a germ of a totally geodesic subvariety of
  $A_g$ which is contained in the Jacobian locus and passes through
  $j([C])$. Then $\dim(Y) \leq 2g+k - 4$.
\end{teo}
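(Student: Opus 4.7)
The plan is to combine the multiplication-by-$\eta$ description of the second fundamental form from the previous theorem with the abundant supply of quadrics in $I_2(K_C)$ furnished by the gonality pencil of $C$.

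Since $Y$ is totally geodesic in $\Ag$ and germ-wise contained in $j(\Mgs)$, a standard computation shows that the second fundamental form of $j$ at $x=[C]$ vanishes on $S^2 W$, where $W := dj_x^{-1}\bigl(T_{j(x)}Y\bigr) \subset H^1(C,T_C)$. Dually, for every $\xi \in I_2(K_C)$ the section $\rho_x(\xi) = \eta \cd \xi \in H^0(S, 2K_S)^+$ must pair trivially with $S^2 W$ under the Serre duality identification $H^0(C, 2K_C) \cong H^1(C, T_C)^*$. Since $\dim H^1(C, T_C) = 3g-3$, the desired bound $\dim W \leq 2g+k-4$ is equivalent to the codimension estimate $\operatorname{codim} W \geq g-k+1$.

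To produce enough quadrics I would exploit the $g^1_k$. Let $A$ be a line bundle realizing it, so $\deg A = k$ and $h^0(A) = 2$; by Riemann--Roch, $h^0(K_C \otimes A^{-1}) = g - k + 1$. For a basis $\{t_0, t_1\}$ of $H^0(A)$ and sections $s_1, s_2 \in H^0(K_C \otimes A^{-1})$, the element
\[
\xi_{s_1, s_2} := (s_1 t_0)\cd(s_2 t_1) - (s_1 t_1)\cd(s_2 t_0) \in S^2 H^0(C, K_C)
\]
is killed by multiplication to $H^0(C, 2K_C)$ and therefore lies in $I_2(K_C)$. These are the classical rank-$4$ quadrics through the canonical model coming from the gonality scroll; fixing a generic $s_1$ and varying $s_2$, one obtains a linear family in $I_2(K_C)$ parametrized by a space of dimension $g-k+1$.

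The crux of the argument, and the main obstacle, is then to show that the resulting sections $\eta \cd \xi_{s_1, s_2} \in H^0(S, 2K_S)^+$ impose $g - k + 1$ linearly independent conditions on $W$. This requires combining the Hodge-theoretic description of $\eta$ near $\Delta \subset S$ --- where, through the second Wahl map, its leading behaviour is pinned down --- with a transversality argument ruling out an accidental rank drop in the multiplication by $\eta$. The hypothesis $k \geq 3$ eliminates the hyperelliptic degeneracy in which the canonical map factors through the gonal $\PP^1$ and these quadrics collapse, while $g \geq 4$ guarantees that $H^0(K_C \otimes A^{-1})$ is non-trivial and that the canonical model is projectively normal. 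Once this is established, one concludes $\dim Y \leq \dim W \leq (3g-3) - (g-k+1) = 2g + k - 4$.
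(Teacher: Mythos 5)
This survey states the theorem without proof, referring to \cite{colombo-frediani-ghigi}, so the benchmark is the argument there. Your setup does match its general strategy and is correct as far as it goes: since $Y$ is totally geodesic in $\Ag$ and lies in the Jacobian locus through the non-hyperelliptic point $j([C])$, the second fundamental form of $j$ vanishes on $S^2W$ for $W=dj_x^{-1}(T_{j(x)}Y)$, equivalently $W$ is isotropic for every quadratic form $\rho_x(\xi)=\eta\cdot\xi$ with $\xi\in I_2(K_C)$; and the quadrics one exploits are indeed the rank~$4$ quadrics attached to the $g^1_k$ by the base-point-free pencil trick, with $h^0(K_C\otimes A^{-1})=g-k+1$ giving the target codimension.

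The step you yourself call ``the crux'' is, however, the entire content of the theorem, and it is missing rather than merely unpolished. The inference you would need --- from ``there is a family of quadrics $\xi_{s_1,s_2}$ of dimension about $g-k+1$ with $\rho_x(\xi_{s_1,s_2})$ vanishing identically on $W$'' to ``$\operatorname{codim} W\ge g-k+1$'' --- is not a formal consequence: a quadratic form vanishing on a subspace is not a linear condition on that subspace, and an arbitrarily large family of forms can vanish on a subspace of codimension one (take all forms $\ell\cdot m$ with $\ell$ a fixed linear form and $m$ arbitrary). Note also that $\xi_{s_1,s_1}=0$, so your family has dimension at most $g-k$, not $g-k+1$. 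What is actually needed is quantitative control of the forms $\eta\cdot\xi$ --- in effect lower bounds on their rank, or explicit evaluations on Schiffer variations at suitable points, which is what \cite{colombo-frediani-ghigi} obtains by combining the multiplication formula with the Hodge--Gaussian machinery of \cite{cpt} and with the fact that $\eta$ is only algebraically accessible along the diagonal (as this survey itself stresses, its behaviour on $S\setminus\Delta$ is not under control beyond the generic nonvanishing of a holomorphic section). Your appeal to ``a transversality argument ruling out an accidental rank drop in the multiplication by $\eta$'' is precisely the statement that has to be proved, so as it stands the proposal assembles the right framework but does not establish the bound $\dim Y\le 2g+k-4$.
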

This immediately yields a bound which only depends on $g$.

\begin{teo}
  If $g\geq 4$ and $Y$ is a germ of a totally geodesic subvariety of
  $A_g$ contained in the Jacobian locus, then
  $\dim Y \leq \frac{5}{2}(g-1)$.
\end{teo}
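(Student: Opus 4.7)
The plan is to deduce the genus-only bound from the preceding $k$-gonal theorem by estimating the gonality of an arbitrary smooth curve via Brill--Noether theory.

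\textbf{Step 1 (Dispose of the hyperelliptic case).} First I would handle the possibility that every point of $Y$ lies in the image of the hyperelliptic locus $\mathcal{H}_g \subset \Mg$. In that case $\dim Y \leq \dim \mathcal{H}_g = 2g-1$, and an elementary inequality $2g-1 \leq \tfrac{5}{2}(g-1)$, valid for $g\geq 3$, gives the conclusion immediately.

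\textbf{Step 2 (Reduce to a non-hyperelliptic center).} Otherwise $Y$ contains some point $p = j([C])$ with $C$ non-hyperelliptic. Since being totally geodesic is a local property at every point of the subvariety, I may replace the original germ by the germ of $Y$ at $p$; it is still a germ of a totally geodesic subvariety of $\Ag$ contained in the Jacobian locus, now passing through the image of a non-hyperelliptic curve of gonality $k = \operatorname{gon}(C)\geq 3$. Apply the previous theorem to get
\[
\dim Y \;\leq\; 2g + k - 4.
\]

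\textbf{Step 3 (Brill--Noether gonality bound).} Next I would invoke the existence half of the Brill--Noether theorem: on any smooth projective curve $C$ of genus $g$ the locus $W^1_d(C)$ is non-empty whenever the Brill--Noether number $\rho(g,1,d)=2d-g-2$ is non-negative. Hence every such $C$ admits a pencil of degree $d = \lceil (g+2)/2 \rceil = \lfloor (g+3)/2 \rfloor$, so
\[
k = \operatorname{gon}(C) \;\leq\; \lfloor (g+3)/2 \rfloor.
\]

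\textbf{Step 4 (Combine).} Finally I would substitute and check by parity of $g$: for $g$ odd,
\[
2g + \tfrac{g+3}{2} - 4 = \tfrac{5(g-1)}{2},
\]
and for $g$ even,
\[
2g + \tfrac{g+2}{2} - 4 = \tfrac{5g-6}{2} < \tfrac{5(g-1)}{2}.
\]
In either case $\dim Y \leq \tfrac{5}{2}(g-1)$.

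There is essentially no hard step here: the whole content sits in the previous theorem and in the Brill--Noether existence statement. The only point requiring a moment's care is the justification that we may center the germ at a non-hyperelliptic point when $Y$ is not entirely contained in the hyperelliptic locus, which is where Step 1 is needed to close the argument.
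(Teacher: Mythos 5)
Your proposal is correct and is essentially the paper's own (implicit) argument: the paper derives this bound "immediately" from the preceding $k$-gonal theorem, exactly as you do, by combining $\dim Y \leq 2g+k-4$ with the classical Brill--Noether bound $k \leq \lfloor (g+3)/2 \rfloor$, which gives $\tfrac{5}{2}(g-1)$ for $g$ odd and a slightly better bound for $g$ even. Your Step 1, disposing of the case $Y \subset j(\mathcal{H}_g)$ via $\dim \mathcal{H}_g = 2g-1 \leq \tfrac{5}{2}(g-1)$, is a sensible extra precaution, since the $k$-gonal theorem assumes $k\geq 3$.
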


Thus the existence of totally geodesic subvarieties (and in particular
of Shimura varieties) of very large dimension is excluded. This agrees
with the Coleman-Oort conjecture. One in fact expects a much better
bound to hold than the one in the previous theorem. But up to now that
is the best known.

\begin{say}
  The Coleman-Oort conjecture precludes the existence of Shimura
  varieties generically contained in the Jacobian locus for large
  genus. But for low genus, namely for $g\leq 7$, one can construct
  examples of totally geodesic subvarieties contained in $\M_g$.  Most
  of these examples are constructed using families of Galois covers of
  $\PP^1$ and are in fact Shimura varieties.  The first examples
  obtained in this way were cyclic covers, see
  e.g. \cite{dejong-noot,moonen-special,rohde}.  A complete list of
  the Shimura varieties that can be obtained using cyclic covers has
  been given by Moonen \cite{moonen-special} using deep results in
  positive characteristic.  It would be interesting to have a simple
  differential-geometric proof of this result.  Some results in that
  direction are contained in \cite{colombo-frediani-ghigi}, but at the
  moment one there is no proof of Moonen's result using differential
  geometry.  Other examples of Shimura varieties generically contained
  in the Jacobian locus were later constructed using non-cyclic Galois
  covers of $\PP^1$, see \cite{moonen-oort} and
  \cite{frediani-ghigi-penegini}.  Finally some examples were gotten
  using Galois cover of elliptic curves, see
  \cite{frediani-penegini-porru}. This paper studies in particular a
  3-dimensional Shimura variety generically contained in $\M_4$.  This
  variety was first constructed by Pirola \cite{pirola-Xiao} to
  disprove a conjecture of Xiao.  The same variety has been studied in
  \cite{grushevsky2015explicit}, where it is shown that it is fibered
  in totally geodesic curves.  Since CM points are countable, only
  countably many fibres are Shimura varieties.  Therefore most of
  these fibres are totally geodesic curves that are not Shimura.
  Obvioulsy they are generically contained in the Jacobian locus.

\end{say}

  \begin{say}
    Other works studying totally geodesic subvarieties in the Jacobian
    locus include \cite{toledo,hain,dejong-zhang,liu-yau-ecc,
      chen-lu-zuo-Compositio,lu-zuo-Mumford-prep, lu-zuo-HE,
      grushevsky-moeller-genus-3,mohajer-zuo,ppp}.  The papers
    \cite{cfgp} and \cite{cf2} consider the corrisponding problem for
    the Prym locus instead of the Jacobian locus.
  \end{say}

  \begin{say}
    As mentioned in the introduction, the idea behind all the results
    we have recalled is that the way in which $\Mg$ sits inside $\Ag$
    should be rather ``complicated''.  Here we wish to make this
    statement more precise in the third way sketched in
    \ref{terzo-rif}.  Consider a Riemannian manifold $A$ and a
    submanifold $M \subset A$.  If the manifold $A$ has a lot of
    totally geodesic submanifolds (and this is the case for symmetric
    spaces) the study of the intersections $M\cap Z$, where $Z$ is a
    totally geodesic submanifold of $A$, gives information on $M$.
    The first question one asks in this setting is whether there is a
    totally geodesic $Z$ such that $M\cap Z = M$, i.e. $M\subset Z$.
    If this does not happen one says that $M$ is \emph{full}.  In
    Euclidean space totally geodesic submanifolds are affine
    subspaces, so being full means that $M \subset \R^n$ is not
    contained in a hyperplane.  If $M$ is full, then for any totally
    geodesic $Z$ the intersection $M\cap Z$ is a possibly singular
    proper submanifold of $M$.  One might consider the totally
    geodesic submanifolds of $A$ as analogues of affine linear
    subspaces.  If the codimension of $M\cap Z$ is always $\geq k$.
    not only $M$, but also all its submanifolds of codimension $> k$,
    are full. This is a measure of the complexity of $M$ in $A$, at
    least when $A$ has a lot of totally geodesic submanifolds.

    The following result was obtained recently in
    \cite{frediani-ghigi-pirola}.

  \end{say}

  \begin{teo}
    \label{notg}
    If $g\geq 3$ and $Y \subset \M_g$ is a divisor, then $j(Y)$ is
    full in $\A_g$.
  \end{teo}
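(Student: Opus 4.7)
I would argue by contradiction. Suppose $j(Y) \subset Z$ for some proper totally geodesic subvariety $Z \subsetneq \Ag$, and fix a general point $x = [C] \in Y$ with $C$ non-hyperelliptic and $Y$ smooth at $x$; set $W := T_x Y \subset T_x \Mg$, which has codimension $1$. Because $Z$ is totally geodesic and contains $j(Y)$, a standard comparison of second fundamental forms along the chain $j(Y) \subset j(\Mg) \subset \Ag$ shows that
\[
\seconda_x(W \otimes W) \;\subset\; N_x^Z \;:=\; (T_{j(x)}Z + T_{j(x)} j(\Mg))/T_{j(x)}j(\Mg) \;\subsetneq\; N_x.
\]
Since $\dim N_x = (g-2)(g-3)/2$, this is a non-trivial containment for $g \geq 4$; the case $g = 3$, where $j$ is birational onto $\Ag$, would be handled separately by observing that no proper complex totally geodesic hypersurface of $\sieg_3$ can contain the image of an arbitrary divisor from $\Mg$.

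Next I would reformulate this constraint via the theorem of \S\ref{seconda}: the dual map $\rho_x : I_2(K_C) \to S^2 H^0(C, 2K_C)$ is multiplication by the distinguished section $\eta \in H^0(S, K_S(2\Delta))^-$. The inclusion $\seconda_x(W \otimes W) \subset N_x^Z$ then translates into the requirement that for every $\alpha \in I_2(K_C)$, the product $\eta \cdot \alpha$, viewed as a bilinear form on $W \otimes W \subset H^1(C, T_C) \otimes H^1(C, T_C)$, lies in a fixed proper subspace of $S^2 H^0(C, 2K_C)$ determined by $W$. The crucial feature is that $Y$ is a divisor, so $\dim Y = 3g-4$ is maximal among proper subvarieties of $\Mg$; I would exploit this by varying $[C]$ along $Y$ and differentiating the constraint in tangent directions to $Y$, extracting a growing family of infinitesimal compatibility conditions on $\eta$ and its Kodaira--Spencer variation over the base $Y$.

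The main obstacle is combining these infinitesimal conditions into a genuine contradiction. The key input is the non-degeneracy of $\eta$ near the diagonal $\Delta \subset S$, which is controlled by the second Wahl/Gaussian map of $C$ (as sketched at the end of \S\ref{seconda}); combining surjectivity properties of this Gaussian map with the $(3g-4)$-parameter variation along $Y$ should force the multiplication $\alpha \mapsto \eta \cdot \alpha$ to fill out $S^2 H^0(C, 2K_C)$ modulo the $W$-constraint sufficiently to contradict $N_x^Z \subsetneq N_x$. The detailed non-degeneracy analysis of $\eta$ along $\Delta$, together with the ad hoc verification needed in low genus where the Gaussian map may fail to be surjective, is where the bulk of the work would lie, as carried out in \cite{frediani-ghigi-pirola}.
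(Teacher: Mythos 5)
Your argument has a genuine gap already at the first step. You assume $j(Y)\subset Z$ with $Z$ a proper totally geodesic subvariety and deduce $\seconda_x(W\otimes W)\subset N_x^Z$ with $N_x^Z:=(T_{j(x)}Z+T_{j(x)}j(\Mg))/T_{j(x)}j(\Mg)\subsetneq N_x$. The containment of $\seconda_x(W\otimes W)$ in the image of $T_{j(x)}Z$ is fine, but the \emph{properness} $N_x^Z\subsetneq N_x$ is asserted, not proved: $Z$ is only required to contain $j(Y)$, it need not lie in the Jacobian locus, and nothing prevents $T_{j(x)}Z+T_{j(x)}j(\Mg)$ from being all of $T_{j(x)}\Ag$ at a general point. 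In that case the second fundamental form imposes no condition whatsoever and your argument never starts. This is not a technicality: the second-fundamental-form machinery of \S\ref{seconda} is adapted to totally geodesic germs \emph{contained in} the Jacobian locus, and even there it only yields dimension bounds of order $\tfrac{5}{2}(g-1)$, far below $\dim Y=3g-4$ for large $g$; fullness is about totally geodesic subvarieties \emph{containing} $j(Y)$, which may be much larger and stick out of $j(\Mg)$. Moreover the culminating step, where the $(3g-4)$-parameter variation plus nondegeneracy of $\eta$ ``should force'' a contradiction, is exactly the part left unproved, and the needed control is not available: as recalled at the end of \S\ref{seconda}, the behaviour of $\eta$ away from the diagonal is not understood, and the Wahl map only governs $\eta$ along $\Delta$.

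For comparison, the proof in \cite{frediani-ghigi-pirola} follows a completely different route: it is an induction on the genus. The base case $g=3$ rests on the fact that $\sieg_3$ contains no totally geodesic divisor (note that for $g=3$ any proper totally geodesic $Z\supset j(Y)$ would automatically be a divisor of $\A_3$, so this suffices), and the inductive step uses the algebro-geometric degeneration/isogeny techniques of \cite{marcucci-naranjo-pirola} together with elementary Lie-theoretic arguments, not a curvature or Gaussian-map computation. So beyond the gap above, your proposal does not reflect the actual mechanism of the proof, and as it stands it is a program rather than a proof.
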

  This shows that in the case of the embedding
  $j: \Mgs \hookrightarrow \Ag$ $k\geq 2$, i.e. for every totally
  geodesic subvariety $\Mg\cap Z$ has codimension at least 2 in $\Mg$.
  The proof is based on induction on the genus. The case $g=3$ follows
  from the fact that $\sieg_3$ contains no totally geodesic
  divisor. The inductive step depends on algebro-geometric techniques
  developed in \cite{marcucci-naranjo-pirola} and on simple Lie
  theoretic arguments.  It is an interesting problem to get better
  estimates for $k$.

  \def\cprime{$'$}

\end{document}